\theoremstyle{definition}
\numberwithin{equation}{section}
\newcommand*{\N}{\mathbb{N}}
\newcommand*{\R}{\mathbb{R}}
\newcommand*{\C}{\mathbb{C}}
\newcommand*{\E}{\mathbb{E}}
\newcommand{\probP}{\text{I\kern-0.15em P}}
\declaretheorem[
	name=Theorem,
	numberwithin=section
	]{thm}
\declaretheorem[
	name=Lemma,
	sibling=thm,
	]{lem}
\declaretheorem[
	name=Remark,
	style=remark,
	numbered=no
	]{rem}
\numberwithin{equation}{section}
\begin{document}
\title[Kadec Pe{\l}czy\'nski theorem in Orlicz spaces]
{A Marcinkiewicz-Zygmund inequality and the Kadec Pe{\l}czyn\'ski theorem in Orlicz spaces}

\author[I. Berkes, E. Stefanescu, and R. Tichy ]{Istvan Berkes, Eduard Stefanescu, and Robert Tichy}
\address{Institut f\"ur Analysis und Zahlentheorie, TU Graz, Steyrergasse 30, 8010 Graz, Austria}
\email{\href{mailto:berkes@tugraz.at }{berkes@tugraz.at }}
\email{\href{mailto:eduard.stefanescu@tugraz.at}{eduard.stefanescu@tugraz.at}}
\email{\href{mailto:tichy@tugraz.at}{tichy@tugraz.at}}

\subjclass[2020]{26D15, 46B09, 46B25, 46E30, 60E15}
\keywords{Marcinkiewicz–Zygmund inequality, Khinchin inequality, Kadec–Pełczyński decomposition, Orlicz spaces, Lorentz spaces, Norm inequalities, Random series in Banach spaces}

\begin{abstract}
In this paper, we extend the Marcinkiewicz--Zygmund inequality to the setting of Orlicz and Lorentz spaces. Furthermore, we generalize a Kadec--Pe{\l}czy\'nski-type result- originally established by the first and third authors for $L^p$ spaces with $1 \le p < 2$ - to a broader class of Orlicz spaces defined via Young functions $\psi$ satisfying $x \le \psi(x) \le x^2$.
\end{abstract}

\maketitle

\section{Introduction}

Inequalities for the $L^p$ norm of partial sums of independent random variables, such as Khinchin's inequality \cite{KH}, the Marczinkiewicz-Zygmund inequality \cite{MZ2,MZ3}, and Rosenthal's inequality \cite{R1,R2}  play an important role in Banach space theory, in particular in the study of the subspace structure of $L^p$ spaces. For basic structure theorems see e.g.\ Kadec and Pe{\l}czy\'nski  \cite{KP}, Rosenthal \cite{R1,R2}, Gaposhkin \cite{GA} and the references there. The purpose of this paper is to extend the Khinchin inequality \cite{KH} and the Marcinkiewicz--Zygmund inequality \cite{MZ1} for independent random variables to Orlicz and Lorentz spaces, thus broadening the classical \(L^p\)-structure theory to these more general settings. Such results were established in full generality by Astashkin; see \cite{A1,A2}. Our contribution here is to provide a simple and direct proof for the particular cases of Orlicz and Lorentz spaces.
Furthermore, in Section 3 we also prove a Kadec-Pe{\l}czy\'nski type result for Orlicz spaces.
Results on Khinchin's inequality in Orlicz spaces generated by the Young function $\psi_2:=e^{-x^2}-1$ can be found in \cite{PE,PE2}. Other related results are \cite{ALM,CI,PAW}.


To make this precise, we recall the notion of equivalence of sequences in Banach spaces. Let \( X \) and \( Y \) be Banach spaces. Two sequences \( (x_n) \subset X \) and \( (y_n) \subset Y \) are said to be equivalent if there exists a constant \( K \geq 1 \) such that for all finitely supported scalar sequences \( (a_n) \), we have
\[
K^{-1} \left\| \sum a_n x_n \right\|_X \leq \left\| \sum a_n y_n \right\|_Y \leq K \left\| \sum a_n x_n \right\|_X.
\]
This notion captures the idea that the sequences induce isomorphic linear structures within their respective spaces. Equivalence of sequences plays a key role in understanding the geometry and basis structure of Banach spaces.


\section{Basic properties}
We use the Vinogradov symbol \(\ll\) to denote an inequality up to a constant, i.e., \(A \ll B\) means that \(A \leq C B\) for some constant \(C > 0\). If a parameter appears in the subscript, such as \(\ll_\alpha\), this indicates that the implicit constant depends on \(\alpha\); $\simeq$ means equal up to constants.
The notation $g(\cdot)$ denotes, the function $x\mapsto g(x)$, e.g. $(\cdot)$ denotes the map $x\mapsto x$. 

A Young-function or Orlicz-function is a convex map $\psi:[0,\infty)\to[0,\infty)$ with $\psi(x)/x\to\infty$ for $x\to\infty$ and $\psi(x)/x\to 0$ for $x\to 0$. We denote its Young complement by $\psi^*$, which is a Young function with the property $\psi^*(x)=\int_0^{|x|}\sup\{s:\psi'(s)\le t\}dt$.
Let $(Y,\mu)$ be a probability space. The Orlicz class is the space of measurable functions $f:Y\to\R$ or $\C$ such that
$$\varrho_{\psi}:=\int_Y\psi(|f(x)|)d\mu(x)<\infty.$$
We call $\varrho_{\psi}$ a modular function associated with $\psi$.
The Orlicz norm of a function $f$ is given by
\begin{equation}\label{onorm}
    \|f\|_{L^\psi(Y)}:=\sup \left\{\left|\int_Yfgd\mu\right|:g\in L_{\psi^*}(Y),\varrho_{\psi^*}(g)\le 1\right\}
\end{equation}
This norm is equivalent to the Luxemburg norm, which—by abuse of notation—we also denote by $\| \cdot \|_{L^\psi(Y)}$:
\begin{equation}\label{olnorm}
    \| f \|_{L^{\psi}(Y)} = \inf \left\{ \lambda > 0 \ : \ \int_{Y} \psi\left(\frac{|f(x)|}{\lambda}\right) d\mu(x) \leq 1 \right\}.
\end{equation}
The \emph{Orlicz-space} $L^\psi$ with assigned Young-function $\psi$ consists of the set of measurable functions $f$ such that $\|f\|_{L^\psi}$ is finite.
In the following we utilize certain properties of the Orlicz-norm and Young-functions, which can be found in chapter $2$ of \cite{SFMA}. For $1\le p<\infty$, if $\psi(x)=x^p$, then $L^\psi=L^p$.
Since we only consider finite-measure spaces, we have for $\psi\ll\varphi$ if and only if $ L^{\varphi}\subset L^{\psi}$ and
\begin{equation}\label{embedding}
    \|\cdot\|_{L^{\psi}}\ll\|\cdot\|_{L^{\varphi}}.
\end{equation}

For  $p\in(1,\infty),$ $q\in[1,\infty],$ the \emph{Lorentz-space} \( L^{p,q}(X) \) consists of all measurable functions \( f \) for which the norm

\[
\|f\|_{L^{p,q}} =
\begin{cases}
\left( \int_0^\infty \left( t^{1/p} f^*(t) \right)^q \frac{dt}{t} \right)^{1/q}, & \text{if } q < \infty, \\
\sup_{t > 0} \, t^{1/p} f^*(t), & \text{if } q = \infty,
\end{cases}
\]

is finite. Here, \( f^* \) denotes the \emph{non-increasing rearrangement} of \( f \), defined by

\[
f^*(t) = \inf \left\{ \lambda > 0 : d_f(\lambda) \leq t \right\},
\]
where $d_f(\lambda)=\mu(\{ x \in X : |f(x)| > \lambda \})$

We require the following properties:
We have \( L^{p,p} = L^p \), while for \( p < r \), the inclusion \( L^p \subsetneq L^{p,r} \subsetneq L^{p,\infty} \) holds, and
\begin{equation}\label{incl}
    \|\cdot\|_{L^{p,r}}\ll\|\cdot\|_{L^{p}}.
\end{equation}
Furthermore, assume for \( 1 \le p, p_1, p_2 < \infty \), \( 1 \leq q, q_1, q_2 \leq \infty \), that

\[
 \frac{1}{p}=\frac{1}{p_1} + \frac{1}{p_2} \quad \text{and}\quad \frac{1}{q}=\frac{1}{q_1} + \frac{1}{q_2}.
\]
Then, for \( f \in L^{p_1,q_2} \) and \( g \in L^{q_1,q_2} \) we have
\begin{equation}\label{Hol}
    \|fg\|_{L^{p,q}} \ll_{p_1,p_2,q_1,q_2} \, \|f\|_{L^{p_1,q_1}} \, \|g\|_{L^{p_2,q_2}}.
\end{equation}

For more details, see \cite{SRC,Mal}.

\section{Main Results}
In this section, we provide the precise statements of the main results of this paper. 
We mention that Rademacher-functions $r_n(t):=\textnormal{sign}(\sin(2\pi 2^n t))$ are viewed as random variables in the Probability space $([0,1],\lambda)$ with $\lambda$ being the Lebesgue measure.

\begin{lem}[Khinchin's inequality in Orlicz spaces]\label{khi}
    Let $r_n$ be the $n$-th Rademacher function. Let $\psi$ be a Young-function with $(\cdot)\ll_\psi\psi\ll_\psi e^{(\cdot)}$ and let $ x_1,\ldots,x_N\in \mathbb{C}$. Then
\begin{equation}\label{Khinch}
    \left\|\sum_{n=1}^N x_nr_n \right\|_{L^{\psi}}  \simeq_\psi  \left\|\sum_{n=1}^N x_nr_n \right\|_{L^{2}} = \left(\sum_{n=1}^N |x_n|^2\right)^{1/2}.
\end{equation}
\end{lem}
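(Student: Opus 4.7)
The plan is to sandwich $\|S\|_{L^\psi}$, where $S:=\sum_{n=1}^N x_n r_n$, between the $L^1$ norm and the sub-Gaussian Orlicz norm, both of which are already known to behave like $\sigma:=\left(\sum_{n=1}^N |x_n|^2\right)^{1/2}$ on Rademacher sums. The right-hand equality in \eqref{Khinch} is immediate from orthonormality of the Rademacher system in $L^2([0,1])$, so only the equivalence $\|S\|_{L^\psi}\simeq_\psi\sigma$ needs proof; throughout I work with the Luxemburg form \eqref{olnorm}.

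For the lower bound I would use $(\cdot)\ll_\psi\psi$: after a standard modification of $\psi$ near $0$, which only alters the Orlicz norm by an equivalent one on the probability space, one may assume $\psi(x)\ge c_\psi x$ for every $x\ge 0$. Substituting this pointwise inequality into the modular in \eqref{olnorm} gives $c_\psi\|S\|_{L^1}/\alpha \le \int \psi(|S|/\alpha)\,dt\le 1$ for every admissible Luxemburg parameter $\alpha$, hence $\|S\|_{L^1}\ll_\psi\|S\|_{L^\psi}$. Combining with the classical $L^1$ Khinchin inequality (Paley--Zygmund applied to the trivial Rademacher fourth-moment bound $\|S\|_{L^4}\ll\sigma$) then gives $\|S\|_{L^\psi}\gg_\psi\sigma$.

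For the upper bound I would note that $e^x/(e^{x^2}-1)\to 0$ as $x\to\infty$, so $\psi\ll_\psi e^{(\cdot)}$ also yields $\psi\ll_\psi\psi_2$, where $\psi_2(x):=e^{x^2}-1$ is the sub-Gaussian Young function. The embedding \eqref{embedding} then gives $\|S\|_{L^\psi}\ll_\psi \|S\|_{L^{\psi_2}}$, reducing matters to the classical sub-Gaussian estimate $\|S\|_{L^{\psi_2}}\ll\sigma$. For the latter I would use Hoeffding's tail inequality $\Prob(|S|>t)\le 2\exp(-t^2/(2\sigma^2))$: with $\alpha=C\sigma$ the layer-cake identity gives
\[
\int \psi_2(|S|/\alpha)\,dt=\int_0^\infty \Prob\!\left(|S|>\alpha\sqrt{u}\right)e^u\,du\le 2\int_0^\infty e^{u(1-C^2/2)}\,du,
\]
which drops below $1$ once $C$ is sufficiently large (any $C\ge\sqrt{6}$ works), so $\|S\|_{L^{\psi_2}}\le C\sigma$.

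The only delicate step is promoting the asymptotic dominations $(\cdot)\ll_\psi\psi\ll_\psi e^{(\cdot)}$ to global pointwise comparisons (up to multiplicative constants and a compact modification near $0$), so that one may substitute into the modular and legitimately invoke \eqref{embedding}. Once this bookkeeping is in place, the argument is essentially a packaging of two standard facts — the $L^1$ Khinchin lower bound and the sub-Gaussianity of Rademacher sums — inside the Orlicz framework, so I do not expect any substantial obstacle.
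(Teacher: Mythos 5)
Your proof is correct and follows essentially the same strategy as the paper: the lower bound reduces to $\|S\|_{L^1}\gg\sigma$ via $(\cdot)\ll_\psi\psi$ together with the classical moment comparison for Rademacher sums, and the upper bound rests on the subgaussian exponential moment bound that comes from $\cosh(x)\le e^{x^2}$ and independence. The only cosmetic difference is that you route the upper bound through the embedding $L^{\psi_2}\hookrightarrow L^{\psi}$ and Hoeffding's inequality, whereas the paper normalizes $\sigma=1$ and bounds the modular directly using $\int_0^1 e^{|S_N|}\,dt\le 2e$ and the inequality $\|f\|_{L^\psi}\le\varrho_\psi(f)+1$.
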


\begin{thm}[Marcinkiewicz–Zygmund inequality in Orlicz spaces]\label{mz}
Let $\{X_n\}_{n=1}^N$ be independent random variables with $\E\left[X_n\right]=0$ on a probability space $(Y,\mu)$. Then, for every Orlicz function $\psi$, with $(\cdot)\ll_\psi\psi(2\cdot)\ll_\psi \psi$, we have
\begin{equation}
        \left\|\sum_{n=1}^NX_n\right\|_{L^\psi}\simeq_\psi\left\|\left(\sum_{n=1}^N\left|X_n\right|^2\right)^\frac{1}{2}\right\|_{L^\psi}
\end{equation}
\end{thm}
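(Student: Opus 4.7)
My plan is to imitate the classical symmetrization-plus-Khinchin proof, replacing each $L^p$ estimate by its Orlicz counterpart and using the hypothesis $\psi(2\,\cdot\,)\ll_\psi\psi$ (the $\Delta_2$-condition) to pass between the Luxemburg norm and the modular $\int\psi(|\cdot|/\lambda)$. First I symmetrize: let $X_n'$ be an independent copy of $X_n$ on $(Y\times Y,\mu\otimes\mu)$ and set $\widetilde X_n:=X_n-X_n'$, $\widetilde S_N:=\sum_n\widetilde X_n$. Since $\E X_n'=0$, the identity $X_n=\E[\widetilde X_n\mid X_n]$ together with the conditional Jensen inequality applied to the convex function $\psi(\cdot/\lambda)$ gives $\int\psi(|S_N|/\lambda)\,d\mu\le\int\psi(|\widetilde S_N|/\lambda)\,d(\mu\otimes\mu)$ for every $\lambda>0$, hence $\|S_N\|_{L^\psi}\le\|\widetilde S_N\|_{L^\psi}$; the reverse inequality (up to a factor~$2$) is just the Orlicz triangle inequality. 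This reduces the problem to the symmetric variables $\widetilde X_n$.

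Since each $\widetilde X_n$ is symmetric and independent of an auxiliary Rademacher sequence $r_n$, the vectors $(r_n\widetilde X_n)_n$ and $(\widetilde X_n)_n$ have the same joint law, so
\[
\|\widetilde S_N\|_{L^\psi(Y\times Y)}=\Big\|\sum_n r_n\widetilde X_n\Big\|_{L^\psi(Y\times Y\times[0,1])}.
\]
For each fixed $(y,y')\in Y\times Y$, Lemma~\ref{khi} applied to the scalars $a_n=\widetilde X_n(y,y')$ yields $\|\sum_n r_na_n\|_{L^\psi(dt)}\simeq_\psi(\sum_n a_n^2)^{1/2}$. I then plan to upgrade this pointwise equivalence to
\[
\Big\|\sum_n r_n\widetilde X_n\Big\|_{L^\psi(Y\times Y\times[0,1])}\simeq_\psi\Big\|\Big(\sum_n\widetilde X_n^2\Big)^{1/2}\Big\|_{L^\psi(Y\times Y)}
\]
by computing the modular via Fubini: the $\Delta_2$ condition combined with the sub-Gaussianity of Rademacher sums that underlies Lemma~\ref{khi} allows Khinchin's pointwise norm equivalence on $[0,1]$ to be turned into a pointwise modular estimate in $(y,y')$, which can then be integrated over $Y\times Y$.

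Finally I relate $(\sum_n\widetilde X_n^2)^{1/2}$ back to $(\sum_n X_n^2)^{1/2}$. The $\ell^2$-triangle inequality $(\sum_n\widetilde X_n^2)^{1/2}\le(\sum_n X_n^2)^{1/2}+(\sum_n(X_n')^2)^{1/2}$ together with the Orlicz triangle inequality gives the upper bound; for the lower bound, the componentwise identity $X_n=\E[\widetilde X_n\mid X_n]$ and Jensen's inequality for the convex $\ell^2$-norm yield $(\sum_n X_n^2)^{1/2}\le\E[(\sum_n\widetilde X_n^2)^{1/2}\mid(X_n)_n]$, whence $\|(\sum_n X_n^2)^{1/2}\|_{L^\psi}\le\|(\sum_n\widetilde X_n^2)^{1/2}\|_{L^\psi}$ by a final application of conditional Jensen to $\psi(\cdot/\lambda)$. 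The main obstacle is the conditional Khinchin step: Lemma~\ref{khi} provides only a norm equivalence on the Rademacher space, whereas the Orlicz norm on the product $Y\times Y\times[0,1]$ is \emph{not} the iterated Luxemburg norm in general, so the $\Delta_2$ property must be used carefully to pass between modulars and Luxemburg norms while integrating the pointwise estimate over $Y\times Y$.
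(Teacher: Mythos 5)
Your proposal is correct and follows essentially the same route as the paper: symmetrization via an independent copy with conditional Jensen for $\psi(\cdot/\lambda)$, Rademacher randomization using the symmetry of $\widetilde X_n$, the Orlicz Khinchin inequality applied conditionally, and the $\Delta_2$-condition $\psi(2\cdot)\ll_\psi\psi$ to absorb the factor $2$ from desymmetrization. You are in fact more explicit than the paper about the one genuinely delicate point — upgrading the pointwise Khinchin norm equivalence on $[0,1]$ to a norm equivalence on the product space via modular/Fubini estimates — which the paper passes over with ``Khinchin's inequality implies the claim,'' and your outlined use of the sub-Gaussian tail together with convexity and $\Delta_2$ is the right way to fill that in.
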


\begin{rem}
    We will see in the proof, that for symmetric random variables the assumptions $(\cdot)\ll_\psi\psi\ll_\psi e^{(\cdot)}$ of Khinchin's inequality \eqref{khi} are sufficient.
\end{rem}


\begin{rem}
    Let $p\in[1,\infty)$. Then, for $\psi(x)=x^p$ the above results imply Khinchin's inequality, see \cite{KH} and the Marcinkiewicz–Zygmund inequality, see \cite{MZ1,MZ2}.
\end{rem}

An alternative generalization of classical \( L^p \)-spaces is given by the \emph{Lorentz spaces} \( L^{p,q} \) for $p\in(1,\infty),$ $q\in[1,\infty],$ which provide a finer scale of function spaces that interpolate between different \( L^p \) norms.


\begin{lem}[Khinchin's inequality in Lorentz spaces]\label{khi2}
    Let $r_n(t):=\textnormal{sign}(\sin(2\pi 2^n t))$, be the $n-$th Rademacher function. Let $p\in(1,\infty),$ $q\in[1,\infty],$ and let $ x_1,\ldots,x_N\in \mathbb{C}$. Then
\begin{equation}\label{Khinch2}
    \left\|\sum_{n=1}^N x_nr_n \right\|_{L^{p,q}}  \simeq_{p,q}  \left\|\sum_{n=1}^N x_nr_n \right\|_{L^{2}} = \left(\sum_{n=1}^N |x_n|^2\right)^{1/2}.
\end{equation}
\end{lem}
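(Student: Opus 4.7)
The strategy is to prove both directions of \eqref{Khinch2} separately: the upper bound follows directly from the classical subgaussian tail estimate for Rademacher sums, and the lower bound is obtained by applying H\"older's inequality in Lorentz spaces, \eqref{Hol}, to $S\cdot S$, using the upper bound at the conjugate exponents as an input.

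\emph{Upper bound.} Write $S:=\sum_{n=1}^N x_n r_n$ and $\sigma:=\bigl(\sum_{n=1}^N|x_n|^2\bigr)^{1/2}$. By Hoeffding's inequality, $S$ is subgaussian, $\Prob(|S|>\lambda)\le 2e^{-\lambda^2/(2\sigma^2)}$, so that the non-increasing rearrangement satisfies $S^*(t)\le \sigma\sqrt{2\log(2/t)}$ for $t\in(0,1)$. Inserting this into the definition of $\|\cdot\|_{L^{p,q}}$ and substituting $u=\log(2/t)$ reduces the estimate to the convergent integral
\[
\int_{\log 2}^{\infty}e^{-uq/p}\,u^{q/2}\,du<\infty,
\]
yielding $\|S\|_{L^{p,q}}\ll_{p,q}\sigma$. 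For $q=\infty$ the integral is replaced by the supremum $\sup_{t\in(0,1)}t^{1/p}\sqrt{2\log(2/t)}$, which is finite since $t^{1/p}$ decays polynomially while $\sqrt{\log(2/t)}$ grows only logarithmically.

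\emph{Lower bound.} Apply the Lorentz--H\"older inequality \eqref{Hol} with $f=g=S$ and exponents $(p_1,p_2)=(p,\,p/(p-1))$, $(q_1,q_2)=(q,\,q/(q-1))$, using the standard conventions $1/0=\infty$ and $1/\infty=0$ when $q\in\{1,\infty\}$. The target indices then satisfy $1/p_1+1/p_2=1$ and $1/q_1+1/q_2=1$, and since $\|S^2\|_{L^{1,1}}=\|S^2\|_{L^1}=\E[|S|^2]=\sigma^2$, the inequality reads
\[
\sigma^2\ll_{p,q}\|S\|_{L^{p,q}}\cdot\|S\|_{L^{p/(p-1),\,q/(q-1)}}.
\]
The conjugate pair still lies in $(1,\infty)\times[1,\infty]$, so the upper bound already established yields $\|S\|_{L^{p/(p-1),q/(q-1)}}\ll_{p,q}\sigma$. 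Dividing through by $\sigma$ gives $\sigma\ll_{p,q}\|S\|_{L^{p,q}}$, which is the desired lower bound.

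\emph{Main obstacle.} Nothing genuinely deep is needed beyond subgaussianity of Rademacher sums and the Lorentz--H\"older inequality stated in Section~2; the most delicate point is the endpoint bookkeeping at $q\in\{1,\infty\}$, where one of the conjugate Lorentz exponents becomes $\infty$ or $1$ and integrals must be replaced by suprema, as well as the identification $\|\cdot\|_{L^{1,1}}=\|\cdot\|_{L^1}$ that turns the classical identity $\|S\|_{L^2}^2=\sigma^2$ into a Lorentz-space identity usable in \eqref{Hol}.
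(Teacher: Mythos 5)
Your proof is correct, but it takes a genuinely different route from the paper's. For the upper bound, the paper reduces to the classical $L^p$ Khinchin inequality: via the embedding $\|\cdot\|_{L^{p,q}}\ll\|\cdot\|_{L^{p}}$ from \eqref{incl} when $q\ge p$, and via the Lorentz--H\"older inequality \eqref{Hol} (paired against a constant function, landing in $L^{2p,2p}=L^{2p}$) when $q<p$. You instead estimate $\|S\|_{L^{p,q}}$ directly from the subgaussian decay of the rearrangement $S^*(t)\le\sigma\sqrt{2\log(2/t)}$, which is self-contained and bypasses the classical inequality entirely; the only cosmetic point is that Hoeffding as you state it is for real coefficients, so for $x_n\in\C$ you should split into real and imaginary parts, which only perturbs the constants in the tail bound. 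For the lower bound, the paper proves $\|S\|_{L^1}=\|S\|_{L^{1,1}}\ll_{p,q}\|S\|_{L^{p,q}}$ by H\"older against the constant function and then quotes the reverse inequality $\|S\|_{L^2}\ll\|S\|_{L^1}$ established in \eqref{conclusion} via Littlewood's interpolation trick; you instead run the duality trick
\begin{equation*}
\sigma^2=\bigl\|S^2\bigr\|_{L^{1,1}}\ll_{p,q}\|S\|_{L^{p,q}}\,\|S\|_{L^{p/(p-1),\,q/(q-1)}}\ll_{p,q}\|S\|_{L^{p,q}}\,\sigma,
\end{equation*}
closing the loop with your own upper bound at the conjugate exponents. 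Both arguments are valid; yours needs only \eqref{Hol} and the tail bound as external inputs, whereas the paper leans on the already-proved $L^p$/Orlicz case. Your endpoint bookkeeping at $q\in\{1,\infty\}$ is consistent (the conjugate pair stays in the admissible range $(1,\infty)\times[1,\infty]$, so the upper bound applies to it), and the identification $\|f\|_{L^{1,1}}=\|f\|_{L^1}$ is an exact identity, so the step $\|S^2\|_{L^{1,1}}=\sigma^2$ is legitimate.
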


\begin{thm}[Marcinkiewicz–Zygmund inequality in Lorentz spaces]\label{mzls}
Let $\{X_n\}_{n=1}^N$ be an independent random variables with $\E\left[X_n\right]=0$. Then, for $p\in(1,\infty),$ $q\in[1,\infty],$ we have
\begin{equation}        \left\|\sum_{n=1}^NX_n\right\|_{L^{p,q}}\simeq_{p,q}\left\|\left(\sum_{n=1}^N\left|X_n\right|^2\right)^\frac{1}{2}\right\|_{L^{p,q}}.
\end{equation}
\end{thm}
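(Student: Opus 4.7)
\emph{Proof plan.} The strategy mirrors the proof of Theorem~\ref{mz} and rests on symmetrization, Khinchin in Lorentz spaces (Lemma~\ref{khi2}), and a distribution-function comparison to handle the aggregation over $(Y,\mu)$.

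For the symmetrization step, enlarge the probability space to contain an independent copy $(X'_n)$ of $(X_n)$ on $(Y,\mu)$ and independent Rademacher functions $(r_n)$ on $([0,1],\lambda)$. Because each $X_n-X'_n$ is symmetric, $\sum_n(X_n-X'_n)$ and $\sum_n r_n(X_n-X'_n)$ have the same law. Together with the triangle inequality and the fact that conditional expectation is bounded on $L^{p,q}$ for $p\in(1,\infty)$ (as $L^{p,q}$ is a rearrangement-invariant Banach function space), this yields the two-sided symmetrization estimate
\[
\Bigl\|\sum_{n=1}^N X_n\Bigr\|_{L^{p,q}(\mu)} \simeq_{p,q} \Bigl\|\sum_{n=1}^N r_n X_n\Bigr\|_{L^{p,q}(\mu\otimes\lambda)}.
\]

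Set $\sigma(\omega):=(\sum_n|X_n(\omega)|^2)^{1/2}$ and $F(\omega,t):=\sum_n r_n(t)X_n(\omega)$; it remains to show $\|F\|_{L^{p,q}(\mu\otimes\lambda)}\simeq_{p,q}\|\sigma\|_{L^{p,q}(\mu)}$. For each fixed $\omega$, $F(\omega,\cdot)$ is a Rademacher sum with deterministic coefficients $(X_n(\omega))$, so Lemma~\ref{khi2} gives $\|F(\omega,\cdot)\|_{L^{p,q}(\lambda)}\simeq_{p,q}\sigma(\omega)$, and the underlying classical bounds (Hoeffding and Paley--Zygmund) yield the explicit two-sided distributional estimate
\[
c_1\mathbf{1}_{\{\alpha\le c_0\sigma(\omega)\}} \;\le\; \lambda\bigl\{\,t:\,|F(\omega,t)|>\alpha\,\bigr\} \;\le\; 2\exp\!\bigl(-\alpha^2/(2\sigma(\omega)^2)\bigr).
\]
By Fubini applied to the product \emph{measure} (not the norm), $d_F(\alpha)=\int_Y\lambda\{|F(\omega,\cdot)|>\alpha\}\,d\mu(\omega)$; integrating the bounds above---splitting at $\{\sigma\le\alpha/C\}$ versus $\{\sigma>\alpha/C\}$ and using polynomial majorization $e^{-x^2/2}\lesssim_k x^{-k}$ of the subgaussian tail---shows that $d_F$ and $d_\sigma$ are comparable up to constant rescalings of $\alpha$, hence so are the rearrangements $F^*$ and $\sigma^*$. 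Since the $L^{p,q}$-norm depends only on the non-increasing rearrangement, this gives the desired equivalence, and combining with the symmetrization estimate proves the theorem.

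\emph{Main obstacle.} In contrast to $L^p$, Lorentz spaces lack a Fubini identity of the form $\|F\|_{L^{p,q}(\mu\otimes\lambda)}=\bigl\|\,\|F(\omega,\cdot)\|_{L^{p,q}(\lambda)}\,\bigr\|_{L^{p,q}(\mu)}$, so the pointwise conditional estimate supplied by Lemma~\ref{khi2} cannot simply be substituted into an outer $L^{p,q}(\mu)$-integration as one does in the $L^p$ case. The distributional comparison in the previous step---carried out directly on the product space via subgaussian and Paley--Zygmund tails---is the key technical ingredient that bypasses this obstruction.
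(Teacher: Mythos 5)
Your skeleton (symmetrization, then Khinchin in Lorentz spaces) matches the paper's proof, and you correctly isolate the real crux: unlike $L^p$, the Lorentz norm on the product space $(Y\times[0,1],\mu\otimes\lambda)$ does not factor through the fiberwise norms, so Lemma~\ref{khi2} (stated for scalar coefficients) cannot simply be "plugged in" after Fubini. The paper's own proof elides this point entirely, so your attempt to supply a product-space distributional argument is a genuine and necessary addition rather than a detour. Your symmetrization via boundedness of conditional expectation on $L^{p,q}$ (valid for $p>1$ since $L^{p,q}$ is an exact interpolation space between $L^1$ and $L^\infty$) is a legitimate replacement for the paper's max-trick, and your lower bound is sound: Paley--Zygmund gives $\lambda\{|F(\omega,\cdot)|>c_0\sigma(\omega)\}\ge c_1$, hence $d_F(\alpha)\ge c_1 d_\sigma(\alpha/c_0)$, hence $F^*(t)\ge c_0\sigma^*(t/c_1)$ and $\|F\|_{L^{p,q}}\gtrsim\|\sigma\|_{L^{p,q}}$.

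The gap is in the upper bound. Your claim that $d_F$ and $d_\sigma$ are ``comparable up to constant rescalings of $\alpha$'' is false in that direction, with constants independent of $N$: take $X_n\equiv N^{-1/2}$, so $\sigma\equiv 1$ and $d_\sigma(\alpha)=0$ for $\alpha\ge 1$, while $F=N^{-1/2}\sum_n r_n$ satisfies $d_F(\alpha)>0$ for all $\alpha$ up to order $\sqrt N$. So no inequality $d_F(\alpha)\le C\,d_\sigma(\alpha/C)$ can hold, and the rearrangements $F^*$ and $\sigma^*$ are not pointwise comparable; only the integrated quantities $\|F\|_{L^{p,q}}$ and $\|\sigma\|_{L^{p,q}}$ are. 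Your splitting at $\{\sigma\le\alpha/C\}$ with the polynomial majorization $e^{-x^2/2}\lesssim_k x^{-k}$ produces
\begin{equation*}
d_F(\alpha)\;\lesssim_k\; d_\sigma(\alpha/C)\;+\;\alpha^{-k}\int_{\{\sigma\le\alpha/C\}}\sigma^k\,d\mu ,
\end{equation*}
and the second term is not dominated by any rescaled $d_\sigma$; to conclude you must insert this bound into $\|F\|_{L^{p,q}}^q\simeq\int_0^\infty\alpha^{q-1}d_F(\alpha)^{q/p}\,d\alpha$ and control the resulting averaged term by a Hardy-type (or Minkowski integral) inequality with $k>p$, or equivalently write $d_F(\alpha)\le 2\int_0^1 d_\sigma\bigl(\alpha/\sqrt{2\ln(1/u)}\bigr)\,du$ and apply Minkowski's inequality in $u$. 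This is a standard but nontrivial step (it is where the restriction $p>1$ and the subgaussian integrability enter), and as written your proof asserts the conclusion of that step without performing it. The rest of the argument is correct.
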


\begin{rem}
    Proving the lower bound of Khinchin's inequality presents a difficulty in the case \( p = 1 \), as Hölder's inequality cannot be applied.
    An additional advantage of assuming \( p \neq 1 \) is that we remain within the framework of Banach spaces, avoiding potential complications that arise in quasi-Banach spaces.
\end{rem}

In the following we prove an extension of the well-known Kadec-Pe{\l}czy\'nski theorem, where we use the terminology of the first and last authors work \cite{BT}.

\begin{thm}[Generalized Kadec-Pe{\l}czy\'nski theorem]\label{kp}
    Let $\psi$ be a Young-function with $(\cdot)\ll_\psi\psi\ll_\psi (\cdot)^2$ and let $(X_n)_{n\in\N}$ be a determining sequence of random variables, such that $\|X_n\|_{L^\psi}=1$ for all $n\in\N$, $\{\psi\left(|X_n|\right), n\ge 1\}$ is uniformly integrable and $X_n\to 0$ weakly in $L^\psi$.
    Let $\mu$ be a limit random measure of $(X_n)_{n\in\N}$.

    Then there exists a subsequence $\left(X_{n_k}\right)$ equivalent to the unit vector basis of $l^2$ if and only if
    \begin{equation}
        \int_{\R}x^2d\mu(x)\in L^{\sqrt{\psi}}.
    \end{equation}

    \end{thm}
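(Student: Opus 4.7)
The plan is to reduce the Orlicz norm of $\sum_n a_n X_{n_k}$ to a square--function expression via the Marcinkiewicz--Zygmund inequality of Theorem \ref{mz}, and then to identify the precise asymptotics of this square function using the limit random measure $\mu$, which will naturally produce the condition $\int_{\R}x^2\,d\mu(x)\in L^{\sqrt\psi}$.

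First I would pass to a subsequence, still denoted $(X_{n_k})$, on which the determining--sequence framework becomes effective: following a Koml\'os/Berkes--Rosenthal--type extraction in the spirit of \cite{BT}, the $X_{n_k}$ can be treated as conditionally independent with conditional law $\mu_\omega$. A symmetrization, comparing $\sum_n a_n X_{n_k}$ with $\sum_n a_n(X_{n_k}-X_{n_k}')$ for an independent copy, reduces matters to a sum of independent symmetric mean--zero variables. The $\Delta_2$--type hypothesis $\psi(2\cdot)\ll\psi$ of Theorem \ref{mz} follows from $\psi\ll(\cdot)^2$ and the convexity of $\psi$, by splitting small and large arguments. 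Theorem \ref{mz} then yields
\[
\Bigl\|\sum_n a_n X_{n_k}\Bigr\|_{L^\psi}\;\simeq_\psi\;\Bigl\|\Bigl(\sum_n a_n^2\,|X_{n_k}|^2\Bigr)^{1/2}\Bigr\|_{L^\psi}.
\]

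Next I would exploit the elementary identity $\|f\|_{L^\psi}^{2}=\bigl\|\,|f|^2\,\bigr\|_{L^{\sqrt\psi}}$ with $\sqrt\psi(y):=\psi(\sqrt y)$, obtained directly from the definition \eqref{olnorm} via the substitution $\mu=\lambda^2$ in the defining infimum. Under $(\cdot)\ll\psi\ll(\cdot)^2$ the function $\sqrt\psi$ is a genuine Young function with $(\cdot)^{1/2}\ll\sqrt\psi\ll(\cdot)$, so combining with the previous step
\[
\Bigl\|\sum_n a_n X_{n_k}\Bigr\|_{L^\psi}^{\,2}\;\simeq_\psi\;\Bigl\|\sum_n a_n^2\,|X_{n_k}|^2\Bigr\|_{L^{\sqrt\psi}}.
\]

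Finally I would identify the right--hand side via the limit random measure. Using that $\{\psi(|X_n|)\}$ is uniformly integrable and performing a further Koml\'os--type extraction, the determining--sequence property should upgrade the in--distribution convergence $|X_{n_k}|^2\Rightarrow\mu_\omega$ to the quantitative statement
\[
\Bigl\|\sum_n a_n^2\,|X_{n_k}|^2 - \Bigl(\sum_n a_n^2\Bigr)V\Bigr\|_{L^{\sqrt\psi}}=o\Bigl(\sum_n a_n^2\Bigr),\qquad V:=\int_{\R}x^2\,d\mu(x),
\]
uniformly in the coefficients $(a_n)$, from which $\|\sum_n a_n X_{n_k}\|_{L^\psi}^{2}\simeq(\sum_n a_n^2)\,\|V\|_{L^{\sqrt\psi}}$. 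Hence $(X_{n_k})$ is equivalent to the unit vector basis of $l^2$ if and only if $V\in L^{\sqrt\psi}$, which is the claim. The main obstacle is this last uniform--in--coefficients control: the Koml\'os--type selection, paired with the uniform integrability of $\psi(|X_n|)$ and the growth bound $\psi\ll(\cdot)^2$, is what carries the argument, in the spirit of the classical Kadec--Pe{\l}czy\'nski estimates but with the scale $L^{\sqrt\psi}$ replacing $L^1$.
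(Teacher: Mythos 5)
Your overall architecture (conditional independence via the determining sequence, a square--function reduction, then identification of the square function through the limit random measure) is reasonable, but the central step breaks under the stated hypotheses. You reduce to Theorem \ref{mz}, whose hypothesis is $\psi(2\cdot)\ll_\psi\psi$, and you claim this follows from $\psi\ll(\cdot)^2$ and convexity "by splitting small and large arguments". It does not: convexity gives the \emph{reverse} inequality $\psi(2x)\ge 2\psi(x)$, and the two--sided bound $(\cdot)\ll\psi\ll(\cdot)^2$ only yields $\psi(2x)/\psi(x)\ll x$. Indeed one can build an admissible counterexample: take $\psi$ piecewise linear along a rapidly growing sequence $(x_k)$ with slope $\simeq x_{k-1}$ on $[2x_{k-1},x_k]$ jumping to slope $\simeq x_k$ on $[x_k,2x_k]$; this is convex, satisfies $x\ll\psi(x)\ll x^2$, yet $\psi(x_k)\simeq x_{k-1}x_k$ while $\psi(2x_k)\simeq x_k^2$, so $\psi(2x_k)/\psi(x_k)\simeq x_k/x_{k-1}\to\infty$. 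Hence Theorem \ref{mz} is not applicable to a general $\psi$ satisfying only $(\cdot)\ll\psi\ll(\cdot)^2$. The paper's proof avoids this entirely: it never invokes the Marcinkiewicz--Zygmund inequality, but sandwiches the Orlicz norm between $L^1$ and $L^2$ via \eqref{embedding} and transfers the two--sided estimate (3.11) of \cite{BT} directly --- the upper bound from $\|\cdot\|_{L^\psi}\le\|\cdot\|_{L^2}$ plus independence, the lower bound from $\|\cdot\|_{L^1}\ll\|\cdot\|_{L^\psi}$ plus the known $L^1$ bound --- and then runs the argument of \cite{BT} unchanged. To salvage your route you would either have to add a $\Delta_2$--type assumption or replace the MZ step by this sandwich.

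There is a second gap at the end. The uniform--in--coefficients statement $\bigl\|\sum_n a_n^2|X_{n_k}|^2-(\sum_n a_n^2)V\bigr\|_{L^{\sqrt\psi}}=o(\sum_n a_n^2)$ is precisely the hard content of the theorem (the analogue of Lemma 3.1 of \cite{BT} and the surrounding estimates), and you only assert that it "should" follow from a Koml\'os--type extraction. Moreover $\sqrt\psi(y)=\psi(\sqrt y)$ is typically \emph{concave} under your standing assumptions (e.g.\ $\psi(x)=x^{3/2}$ gives $y^{3/4}$), so it is not a Young function and $L^{\sqrt\psi}$ is only quasi--normed; the triangle inequality you use to pass from the displayed estimate to $\|\sum_n a_n^2|X_{n_k}|^2\|_{L^{\sqrt\psi}}\simeq(\sum_n a_n^2)\|V\|_{L^{\sqrt\psi}}$ is therefore not available as stated (the same subtlety as $L^{p/2}$ for $p<2$, which \cite{BT} handles explicitly). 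Your identity $\|f\|_{L^\psi}^2=\||f|^2\|_{L^{\sqrt\psi}}$ for the Luxemburg norm is correct, but the necessity direction still needs its own argument (as in \cite{BT}, via $\|N^{-1/2}\sum_{k\le N} X_{m_k}\|=O(1)$) rather than being read off from a two--sided equivalence whose derivation already presupposes $V\in L^{\sqrt\psi}$.
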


\begin{proof}[Proof of Theorem \ref{kp}]
The proof runs along the same lines as the proof of Theorem~1.4 in \cite{BST}, to appear in \emph{Studia Mathematica}.
\end{proof}

\section{Proof of Khinchin's Inequality \ref{Khinch} and \ref{khi2}}
We adapt the proof of Khinchin's Inequality of Muscalu and Schlag \cite[Lemma 5.5]{CS}.

\begin{proof}[Proof of Lemma \ref{khi}]
 By definition every Young-function is convex. A generalized version of Young's inequality, see \cite{HLP}, together with \eqref{onorm}
 impliy for any Young function $\psi$ and $f\in L^{\psi}$
 \begin{equation}\label{polyaineq}
     \|f\|_{L^\psi}\le\varrho_{\psi}(f)+1.
 \end{equation}
    Assume $\sum_{n=1}^Nx_n^2=1$ and $x_n\in\R$. Since $\{r_n\mid n\in\{1,\dots,N\}\}$ is a set of independent random variables, so is $\{e^{x_nr_n}\mid n\in\{1,\dots,N\}\}$. This implies that

    \begin{equation}
        \int_0^1e^{\pm\sum_{n=1}^Nx_nr_n(t)}dt=\prod_{n=1}^N\int_0^1e^{\pm x_nr_n(t)}dt=\prod_{n=1}^N\cosh(x_n)\le\prod_{n=1}^Ne^{x_n^2}=e,
    \end{equation}
hence,
\begin{equation}\label{2e}
        \int_0^1e^{\left|\sum_{n=1}^Nx_nr_n(t)\right|}dt\le 2e,
    \end{equation}
Utilizing equations \eqref{polyaineq}, \eqref{2e} and the assumption $\psi\ll_\psi e^{(\cdot)}$ yields
\begin{equation}
    \left\|\sum_{n=1}^N x_nr_n \right\|_{L^{\psi}}  \le \varrho_\psi\left(\sum_{n=1}^N x_nr_n \right) +1\ll_{\psi}\int_0^1e^{\left|\sum_{n=1}^Nx_nr_n(t)\right|}dt+1\ll_\psi 1
\end{equation}
Now let $x_n\in\R$ and define $\left(\sum_{n=1}^Nx_n^2\right)^\frac{1}{2}=\Upsilon>0$; $\Upsilon=0$ is not interesting. Then
\begin{equation}
    \left\|\sum_{n=1}^N \frac{x_nr_n }{\Upsilon}\right\|_{L^{\psi}}\ll 1,
\end{equation}
hence
\begin{equation}
    \left\|\sum_{n=1}^N x_nr_n \right\|_{L^{\psi}}\ll \left(\sum_{n=1}^Nx_n^2\right)^\frac{1}{2}.
\end{equation}
For $x_n\in\C$, the triangle inequality and the fact that $\Re(x_n)^2, \Im(x_n)^2\le |x_n|^2$ concludes the upper bound:
\begin{equation}\label{upper}
    \left\|\sum_{n=1}^N x_nr_n \right\|_{L^{\psi}}\le \left\|\sum_{n=1}^N \Re(x_n)r_n \right\|_{L^{\psi}}+\left\|\sum_{n=1}^N \Im(x_n)r_n \right\|_{L^{\psi}}\ll \left(\sum_{n=1}^N|x_n|^2\right)^\frac{1}{2}
\end{equation}
Let $S_N=\sum_{n=1}^Nx_nr_n$. The lower bound follows from applying Hölder's inequality and using the upper bound \eqref{upper}:
 \begin{equation}
     \|S_N\|_{L^2}\le \||S_N|^{\frac{1}{3}}\|_{L^3} \||S_N|^{\frac{2}{3}}\|_{L^6} \ll  \||S_N|\|_{L^1}^{\frac{1}{3}} \||S_N|\|_{L^2}^{\frac{2}{3}},
 \end{equation}
hence
\begin{equation}\label{conclusion}
    \|S_N\|_{L^2}\ll\|S_N\|_{L^1}\le \|S_N\|_{L^\psi}.
\end{equation}
\end{proof}

\begin{proof}[Proof of Lemma \ref{khi2}]
    By equation \eqref{incl} we have for $p\le q$
    \begin{equation}
        \left\|S_N \right\|_{L^{p,q}}  \ll  \left\|S_N \right\|_{L^{p}},
    \end{equation}
and for $q\le p$ the Hölder inequality \eqref{Hol} implies
\begin{equation}
        \left\|S_N \right\|_{L^{p,q}}  \ll_{p_1,q_1,p_2,q_2}  \left\|S_N \right\|_{L^{2p,2p}}=\left\|S_N \right\|_{L^{2p}},
    \end{equation}
where $p_1=p_2=2p$, and $q_1=(2p-q)/(2pq)$, $q_2=2p$. Thus, the implicit constant is still only dependent on $p$ and $q$. The upper bound follows now from the classical Khinchin inequality, where we gain constant factors only dependent on $p$.

For the lower bound it is sufficient to adapt the second inequality in equation \eqref{conclusion}, which can directly be done by the Hölder inequality. For $1=1/p_1+1/p_2$ and $1=1/q_1+1/q_2$, we choose $p_1=p/(p-1)$, $p_2=p$ and $q_1=q/(q-1)$, $q_2=q$. If we consider $q=1$, then we let $q_1=\infty$ and vice versa. Thus:
\begin{equation}
    \|S_N\|_{L^{1}}=\|S_N\|_{L^{1,1}}\ll_{p,q}\|S_N\|_{L^{p,q}}.
\end{equation}

\end{proof}

\section{Proof of the Marcinkiewicz-Zygmund inequality \eqref{mz}}
\begin{proof}[Proof of Theorem \ref{mz}]
    The proof follows closely the proof of the well-known Marcinkiewicz–Zygmund inequality, see e.g. \cite[Proposition 5.15]{CS} or \cite[Theorem 10.3.2]{CT}.

    \textbf{Step 1:} It is easy to check, that $\sum_{n=1}^NX_n\in L^\psi$ iff $X_i\in L^\psi$ for $i=1,\dots,N$, iff $\left(\sum_{n=1}^NX_n^2\right)^{1/2}\in L^\psi$, whence the latter may be supposed.

    \textbf{Step 2:} Let first $\{X_i\}_{n=1}^N$ be symmetric, i.e. $X_i=-X_i$, then 
    \begin{equation}     \left\|\sum_{n=1}^Nr_iX_i\right\|_{L^{\psi}}=\left\|\sum_{n=1}^NX_i\right\|_{L^{\psi}},
    \end{equation}
    where $r_i$ are Rademacher functions viewed as being independent of $\{X_i\}_{j=1}^N$. Khinchin's inequality \eqref{khi} implies the claim.

    \textbf{Step 3:} Let \(\tilde{X}_n := X_n - \acute{X}_n\) be the symmetrization of \(X_n\) for all \(1 \leq n \leq N\), where \(\{ \acute{X}_n \}_{n=1}^N\) are independent of and identically distributed with \(\{ X_n \}_{n=1}^N\). Then
    
    \begin{align}
    \begin{split}
        \left\|\sum_{i=n}^Nr_nX_n\right\|_{L^\psi}&\le\left\|\sum_{i=n}^Nr_n\tilde{X}_n\right\|_{L^\psi}\le \left\|2\max\left\{\sum_{i=n}^Nr_nX_n,\sum_{i=n}^Nr_n\acute{X}_n\right\}\right\|_{L^\psi}\\  &\ll_{\psi}\left\|\sum_{i=n}^Nr_nX_n\right\|_{L^\psi}+\left\|\sum_{i=n}^Nr_n\acute{X}_n\right\|_{L^\psi}=2\left\|\sum_{i=n}^Nr_nX_n\right\|_{L^\psi},
        \end{split}
      \end{align}
     where the first inequality can be shown identically to the original proof (see the references above), while the third follows from our assumption \(\psi(2\cdot) \ll_{\psi} \psi\). Applying Khinchin's inequality concludes the proof.
 \end{proof}

\begin{proof}[Proof of Theorem \ref{mzls}]
    Steps 1 and 2, as well as the first inequality in Step 3, are identical to the proof above. The remaining inequality is derived from the following argument:
     \begin{align}
    \begin{split}
        \left\|\sum_{i=n}^Nr_n\tilde{X}_n\right\|_{L^{p,q}}^q&\le \int_0^\infty t^\frac{1}{p}\left[\left(\sum_{i=n}^Nr_nX_n\right)^*+\left(\sum_{i=n}^Nr_n\acute{X}_n\right)^*\right]^q\\  &\le 2^{q-1}\left(\left\|\sum_{i=n}^Nr_nX_n\right\|_{L^{p,q}}^q+\left\|\sum_{i=n}^Nr_n\acute{X}_n\right\|_{L^{p,q}}^q\right)\simeq_q\left\|\sum_{i=n}^Nr_nX_n\right\|_{L^{p,q}}^q
        \end{split}
      \end{align}
      The first inequality follows from $d_{f+g}\le d_f+d_g$, and the second from Young’s inequality. Khinchin’s inequality \eqref{khi2} completes the proof. We remark that the ideas used in this proof are the same as those used in proving the triangle inequality for the Lorentz norm.
\end{proof}

\textbf{Acknowledgments}
ES expresses gratitude to Christoph Aistleitner and Andrei Shubin for helpful discussions.

\bibliographystyle{siam}

\begin{thebibliography}{99}

























\bibitem{A1}
{S.~V.~Astashkin},
Independent functions in rearrangement invariant spaces and the Kruglov property,
\emph{Sbornik: Mathematics} \textbf{199} (2008), no.~7, 945.\\
\href{https://dx.doi.org/10.1070/SM2008v199n07ABEH003948}{doi:10.1070/SM2008v199n07ABEH003948}.


\bibitem{A2} {S.~V.~Astashkin}, Sequences of independent functions and structure of rearrangement invariat spaces. Russian Math.\ Surveys 79:3 (2024), 375-457.

\bibitem{ALM}
S.~V.~Astashkin, M.~Leibov and L.~Maligranda,
Rademacher functions in BMO.
\emph{Studia Math.} \textbf{205} (2011), 83--100.


\bibitem{SRC} 
C.~Bennett and R.~C.~Sharpley, 
\emph{Interpolation of Operators}, 
Academic Press, 1988.

\bibitem{BT} 
I.~Berkes and R.~Tichy, 
The Kadec–Pe{\l}czy\'nski theorem in \( L^p \), \( 1 \le p < 2 \), 
\emph{Proc. Amer. Math. Soc.} \textbf{144} (2016), 2053--2066.

\bibitem{BST} 
I.~Berkes, E.~Stefanescu and R.~Tichy, 
Lacunary series, nonlinear functionals and Banach space structure, 
\emph{arXiv preprint arXiv:2606.07055} (2026).

\bibitem{CT} 
Y.~Chow and H.~Teicher, 
\emph{Probability Theory: Independence, Interchangeability, Martingales}, 
Springer, New York, 2012.

\bibitem{CI} 
A.~Cianchi, 
An optimal interpolation theorem of Marcinkiewicz type in Orlicz spaces, 
\emph{J. Funct. Anal.} \textbf{153} (1998), 357--381.

\bibitem{GA} 
V.~F.~Gaposhkin, 
Lacunary series and independent functions, 
\emph{Russian Math. Surveys} \textbf{21}(6) (1966), 3--82. 
\href{https://doi.org/10.1070/RM1966v021n06ABEH001196}{doi:10.1070/RM1966v021n06ABEH001196}

\bibitem{HLP} 
G.~Hardy, J.~E.~Littlewood, and G.~Pólya, 
\emph{Inequalities}, 
Cambridge University Press, 1952.

\bibitem{KP} 
M.~A.~Kadec and A.~Pe{\l}czy\'nski, 
Bases, lacunary sequences and complemented subspaces in the spaces \( L_p \), 
\emph{Studia Math.} \textbf{21} (1962), 161--176.

\bibitem{KH} 
Y.~Khinchin, 
\"Uber dyadische Br\"uche, 
\emph{Math. Z.} \textbf{18} (1923), 109--116.

\bibitem{Mal}
L.~Maligranda,
\emph{Orlicz Spaces and Interpolation}.
Sem.\ Math.\ 5 (1989).

\bibitem{MZ1} 
J.~Marcinkiewicz and A.~Zygmund, 
Sur les fonctions indépendantes, 
\emph{Fund. Math.} \textbf{28} (1937), 60--90.

\bibitem{MZ2} 
J.~Marcinkiewicz and A.~Zygmund, 
Quelques théorèmes sur les fonctions indépendantes, 
\emph{Math. Z.} \textbf{7} (1938), 104--120.

\bibitem{MZ3} 
J.~Marcinkiewicz and A.~Zygmund, 
Some theorems on orthogonal systems, 
\emph{Fund. Math.} \textbf{28} (1937), 309--335.

\bibitem{CS} 
C.~Muscalu and W.~Schlag, 
\emph{Classical and Multilinear Harmonic Analysis}, 
Cambridge University Press, 2013.

\bibitem{PAW} 
A.~Pawlewicz and M.~Wojciechowski, 
Marcinkiewicz sampling theorem for Orlicz spaces, 
\emph{Positivity} \textbf{26} (2022), Article 7.

\bibitem{PE} 
G.~Peskir, 
Best constants in Kahane–Khintchine inequalities in Orlicz spaces, 
\emph{J. Multivariate Anal.} \textbf{45} (1993), 183--216.

\bibitem{PE2} 
G.~Peskir, 
Maximal inequalities of Kahane–Khintchine type in Orlicz spaces, 
\emph{Math. Proc. Cambridge Philos. Soc.} \textbf{115}(1) (1994), 175--190.

\bibitem{R1} 
H.~P.~Rosenthal, 
On the subspaces of \( L^p \) (\( p > 2 \)) spanned by sequences of independent random variables, 
\emph{Israel J. Math.} \textbf{8} (1970), 273--303.

\bibitem{R2} 
H.~P.~Rosenthal, 
On the span in \( L^p \) of sequences of independent random variables, 
in: \emph{Proc. 6th Berkeley Symp. Math. Stat. Probab.}, Vol. II: Probability Theory, 
Univ. of California Press, 1972, 149--167.

\bibitem{SFMA} 
F.~Smithies, 
Review of: M.~A.~Krasnosel'skii and Ya.~B.~Rutickii, \emph{Convex Functions and Orlicz Spaces}, 
Noordhoff, Groningen, 1961, 
\emph{Math. Gazette} \textbf{47} (1963), 266--267. 
\href{https://doi.org/10.2307/3613435}{doi:10.2307/3613435}














\end{thebibliography}






\end{document}